\renewcommand{\bm}{}
\tikzset{myarrow/.style={ decoration={bent,aspect=0.3, markings,mark=at
  position 0.5 with {\arrow[scale=1.2]{latex'}}}, postaction=decorate}}
\tikzset{myarrowshort/.style={ decoration={bent,aspect=0.3, markings,mark=at
  position 0.3 with {\arrow[scale=1.2]{latex'}}}, postaction=decorate}}
\tikzset{myarrowshorter/.style={ decoration={bent,aspect=0.3, markings,mark=at
  position 0.2 with {\arrow[scale=1.2]{latex'}}}, postaction=decorate}}
\tikzset{->-/.style={decoration={markings, mark=at position #1 with
  {\arrow{>}}},postaction={decorate}}}
\tikzset{my_dot/.style={fill, circle, inner sep=0pt,minimum size=1.5pt}}
\tikzset{my_node/.style={fill, circle, inner sep=0pt,minimum size=3pt}}
\theoremstyle{plain}
  \newtheorem{thm}{Theorem}[section]
  \newtheorem{lem}[thm]{Lemma}
  \newtheorem{cor}[thm]{Corollary}
\theoremstyle{definition}
  \newtheorem{defn}[thm]{Definition}
  \newtheorem{rem}[thm]{Remark}
\def\Z{\mathbb{Z}}
\DeclareMathOperator{\Div}{Div}
\DeclareMathOperator{\Jac}{\mathcal{S}}
\DeclareMathOperator{\Pic}{Pic}
\DeclareMathOperator{\T}{\mathcal{T}}
\DeclareMathOperator{\E}{\mathcal{E}}
\def\epsilon{\varepsilon}
\def\del{\partial}
\title{Sandpiles, spanning trees, and plane duality}
\author[M. Chan, D. Glass, M. Macauley, D. Perkinson, C. Werner, Q. Yang]{Melody Chan, Darren Glass, Matthew Macauley, \\
David Perkinson, Caryn Werner, Qiaoyu Yang}
\begin{document}

\begin{abstract} 
Let $G$ be a connected, loopless multigraph.  The sandpile group of $G$ is a
finite abelian group associated to $G$ whose order is equal to the number of
spanning trees in $G$.  Holroyd et al.~used a dynamical process on graphs called
rotor-routing to define a simply transitive action of the sandpile group of $G$
on its set of spanning trees.  Their definition depends on two pieces of
auxiliary data: a choice of a ribbon graph structure on $G$, and a choice of a
root vertex.  Chan, Church, and Grochow showed that if $G$ is a planar ribbon
graph, it has a canonical rotor-routing action associated to it, i.e., the
rotor-routing action is actually independent of the choice of root vertex.

It is well-known that the spanning trees of a planar graph $G$ are in canonical
bijection with those of its planar dual $G^*$, and furthermore that the sandpile
groups of $G$ and $G^*$ are isomorphic.  Thus, one can ask: are the two
rotor-routing actions, of the sandpile group of $G$ on its spanning trees, and
of the sandpile group of $G^*$ on its spanning trees, compatible under plane
duality? In this paper, we give an affirmative answer to this question, which
had been conjectured by Baker.
\end{abstract}

\maketitle

\section{Introduction}

Let $G$ be a connected multigraph with no loop edges.  The {\em sandpile group} of $G$ is a finite abelian group 
whose order is equal to the number of spanning trees in $G$; it is the group of {\em degree zero divisors} of $G$ modulo the equivalence relation generated by {\em lending moves}.  (We will recall all relevant definitions in Section \ref{S:prelim}.)

In \cite{rotor}, Holroyd, Levine, Meszar\'os, Peres, Propp, and Wilson use a dynamical process on graphs called {\em rotor-routing} to define a simply transitive action of the sandpile group of $G$  on its set of spanning trees.  Rotor-routing itself was introduced in \cite{PDDK} under the name ``Eulerian walkers'' and has been rediscovered several times in different fields: see \cite{rotor} for a concise history of the topic.

The definition of the rotor-routing action on $G$ given in \cite{rotor} involves two pieces of auxiliary data.  First, the action is defined with respect to a choice of a root vertex  $v\in V(G)$, or {\em basepoint}.  Second, it depends on a {\em ribbon graph} structure on $G$: a choice of a cyclic ordering of the set of edges incident to each vertex $v$.  Note that such a choice of cyclic orders defines an embedding of $G$ on some closed, oriented surface $S$, in which all cyclic orders correspond to a positive orientation, say, with respect to $S$.  We say that $G$ is a {\em planar} ribbon graph if $S$ is just a sphere, i.e., if the chosen ribbon structure equips~$G$ with an embedding into the plane.

A recent paper of Chan, Church, and Grochow~\cite{CCG} answers a question of J.~Ellenberg~\cite{MO} by proving that the rotor-routing action does not depend on the choice of basepoint if and only if $G$ is a planar ribbon graph.  This result is somewhat surprising, and as a nice consequence of it, we may henceforth refer to {\em the} rotor-routing action on a planar ribbon graph, without further reference to a choice of basepoint.

Any graph $G$ embedded in the plane has a planar dual graph $G^*$ whose spanning trees are in canonical bijection with those of $G$.  Moreover, the sandpile groups of $G$ and $G^*$ are, up to sign, canonically isomorphic \cite{bhn} (see also \cite{CR}). Thus, one would hope that the two rotor-routing actions, of the sandpile group of $G$ on the set $\T(G)$ of its spanning trees, and of the sandpile group of $G^*$ on its spanning trees, are compatible.

This was, in fact, exactly the conjecture suggested to us by M.~Baker.
In this paper, we provide a proof of Baker's conjecture on the compatibility of the rotor-routing action of the sandpile group with plane duality.  See Theorem~\ref{thm:main} for the precise statement, and see Figure~\ref{fig:intro} for an example illustrating the result.

We begin with preliminary definitions on the sandpile group and rotor-routing in Section \ref{S:prelim}.
The proof of our main result occupies Section \ref{S:main}.  The key idea of our proof is the {\em angle} betweeen two spanning trees $T$ and $T'$ of~$G$: see Definition~\ref{def:angle}.  The angle from $T$ to $T'$ remembers the element of the sandpile group that takes $T$ to $T'$ under rotor-routing.  On the other hand, we are able to show using a direct geometric argument that the angle is compatible with plane duality, so the main theorem follows.

We would also like to refer the reader to the recent preprint \cite{BW}, which arrives at another proof of Theorem \ref{thm:main} via a completely different route.
In that paper, Baker and Wang prove that the bijections obtained by Bernardi in  \cite[Theorem~45]{bernardi} give rise to another simply transitive action of the sandpile group on the spanning trees of a ribbon graph $G$ with a fixed root vertex.  They show that this action is compatible with plane duality and that it coincides with the rotor-routing action when $G$ is planar.  It would be interesting to study the relationship between these two approaches further.
\bigskip

\noindent {\bf Acknowledgments.} This work grew out of discussions at the workshop ``Generalizations of chip-firing and the critical group'' at the American Insitute of Mathematics (AIM) in Palo Alto, July 8--12, 2013.  The authors would like to thank the organizers of that conference (L.~Levine, J.~Martin, D.~Perkinson, and J.~Propp) as well as AIM and its staff, and M.~Baker for suggesting the conjecture that led to Theorem \ref{thm:main} and for comments on an earlier draft.  We thank Collin Perkinson for help with proofreading. MC was supported by NSF award number 1204278.  MM was supported by NSF grant DMS-1211691.

\begin{figure}
\begin{tikzpicture}[my_node/.style={fill, circle, inner sep=0pt,minimum size=2pt}, scale=0.78]
  \begin{scope}[shift={(0,0)},scale=1.0]
\node[my_node,label=left:$x$] (x) at (-1,0) {};
\node[my_node,label=right:$y$] (y) at (1,0) {};
\node[my_node,label=below:$z$] (z) at (0,-1.4) {};
\node[my_node,label=above:$w$] (w) at (0,1.4) {};
\draw[ultra thick] (x)--(w);
\draw[ultra thick] (x)--(y);
\draw[ultra thick] (x)--(z);
\draw (w)--(y);
\draw (z)--(y);
\draw[font=\boldmath] (-1.5,1) node {$G$};
\node[my_node,label=above:$a$] (a) at (0,0.5) {};
\node[my_node,label=below:$b$] (b) at (0,-0.5) {};
\node[my_node,label=right:$c$] (c) at (2,0) {};
\draw[ultra thick,color=gray] (a) to [bend left=50] (c);
\draw[ultra thick,color=gray] (b) to [bend right=50] (c);
\draw[color=gray] (a)--(b);
\draw[color=gray] (a) to[out=165,in=-80] (-1.0,1.4) to[out=85,in=185] (0,2.3) to[out=-1,in=100] (c);
\draw[color=gray] (b) to[out=-165,in=80] (-1.0,-1.4) to[out=-85,in=-185] (0,-2.3) to[out=1,in=-100] (c);
\draw[font=\boldmath,color=gray] (2.0,1.8) node {$G^*$};
\end{scope}
\begin{scope}[shift={(4,3)},scale=1.0]
\node[my_node] (x) at (-1,0) {};
\node[my_node] (y) at (1,0) {};
\node[my_node] (z) at (0,-1.4) {};
\node[draw,shape=circle,inner sep=2pt] (w) at (0,1.4) {};
\draw[->-=0.6,ultra thick] (w)--(x);
\draw[->-=0.6,ultra thick] (y)--(x);
\draw[->-=0.6,ultra thick] (z)--(x);
\draw (w)--(y);
\draw (z)--(y);
\draw (0,-2) node {$T$};
\end{scope}
\begin{scope}[shift={(7,3)},scale=1.0]
\node[my_node] (x) at (-1,0) {};
\node[draw,shape=circle,inner sep=2pt] (y) at (1,0) {};
\node[my_node] (z) at (0,-1.4) {};
\node[my_node] (w) at (0,1.4) {};
\draw[->-=0.6,ultra thick] (w)--(y);
\draw[->-=0.6,ultra thick] (y)--(x);
\draw[->-=0.6,ultra thick] (z)--(x);
\draw (w)--(x);
\draw (z)--(y);
\end{scope}
\begin{scope}[shift={(10,3)},scale=1.0]
\node[my_node] (x) at (-1,0) {};
\node[my_node] (y) at (1,0) {};
\node[my_node] (z) at (0,-1.4) {};
\node[draw,shape=circle,inner sep=2pt] (w) at (0,1.4) {};
\draw[->-=0.5,ultra thick] (w)--(y);
\draw[->-=0.5,ultra thick] (y)--(w);
\draw[->-=0.6,ultra thick] (z)--(x);
\draw (x)--(y);
\draw (w)--(x);
\draw (z)--(y);
\end{scope}
\begin{scope}[shift={(13,3)},scale=1.0]
\node[draw,shape=circle,inner sep=2pt] (x) at (-1,0) {};
\node[my_node] (y) at (1,0) {};
\node[my_node] (z) at (0,-1.4) {};
\node[my_node] (w) at (0,1.4) {};
\draw[->-=0.6,ultra thick] (w)--(x);
\draw[->-=0.6,ultra thick] (y)--(w);
\draw[->-=0.6,ultra thick] (z)--(x);
\draw (w)--(x);
\draw (z)--(y);
\draw (x)--(y);
\draw (-0.1,-2) node {$[w\!-\!x]\cdot T$};
\end{scope}
\begin{scope}[shift={(4,-2.5)},scale=0.8]
\node[my_node] (a) at (0,0.5) {};
\node[my_node] (b) at (0,-0.5) {};
\node[draw,shape=circle,inner sep=2pt] (c) at (2,0) {};
\draw[->-=0.6,ultra thick,color=gray] (c) to [bend right=50] (a);
\draw[->-=0.6,ultra thick,color=gray] (b) to [bend right=50] (c);
\draw[color=gray] (a)--(b);
\draw[color=gray] (a) to[out=165,in=-80] (-1.0,1.4) to[out=85,in=185] (0,2.3) to[out=-1,in=100] (c);
\draw[color=gray] (b) to[out=-165,in=80] (-1.0,-1.4) to[out=-85,in=-185] (0,-2.3) to[out=1,in=-100] (c);
\draw[color=black!80] (0.1,3) node {$T^*$};
\end{scope}
\begin{scope}[shift={(8,-2.5)},scale=0.8]
\node[my_node] (a) at (0,0.5) {};
\node[draw,shape=circle,inner sep=2pt] (b) at (0,-0.5) {};
\node[my_node] (c) at (2,0) {};
\draw[->-=0.5,ultra thick,color=gray] (b) to [bend right=50] (c);
\draw[->-=0.5,ultra thick,color=gray] (c) to [bend left=50] (b);
\draw[color=gray] (a) to [bend left=50] (c);
\draw[color=gray] (a)--(b);
\draw[color=gray] (a) to[out=165,in=-80] (-1.0,1.4) to[out=85,in=185] (0,2.3) to[out=-1,in=100] (c);
\draw[color=gray] (b) to[out=-165,in=80] (-1.0,-1.4) to[out=-85,in=-185] (0,-2.3) to[out=1,in=-100] (c);
\end{scope}
\begin{scope}[shift={(12,-2.5)},scale=0.8]
\node[draw,shape=circle,inner sep=2pt] (a) at (0,0.5) {};
\node[my_node] (b) at (0,-0.5) {};
\node[my_node] (c) at (2,0) {};
\draw[->-=0.6,ultra thick,color=gray] (c) to [bend left=50] (b);
\draw[color=gray] (a) to [bend left=50] (c);
\draw[->-=0.6,ultra thick,color=gray] (b)--(a);
\draw[color=gray] (a) to[out=165,in=-80] (-1.0,1.4) to[out=85,in=185] (0,2.3) to[out=-1,in=100] (c);
\draw[color=gray] (b) to[out=-165,in=80] (-1.0,-1.4) to[out=-85,in=-185] (0,-2.3) to[out=1,in=-100] (c);
\draw[color=black!80] (1.2,3) node {$[c\!-\!a]\cdot T^*$};
\end{scope}
\end{tikzpicture}
\caption{This figure shows the result of applying the element $[w\!-\!x]$ of $\Jac(G)$ to the spanning tree $T$ and, in the bottom row, the result of applying the element $[c\!-\!a]$ of $\Jac(G^*)$ to $T^*$.  The graph $G$ has all rotors oriented clockwise relative to the page, and its planar dual $G^*$ has all rotors oriented counterclockwise.  We chose $x$ and $a$ as our basepoints of $G$ and $G^*$ for the respective computations.
The isomorphism $\Jac(G)\cong \Jac(G^*)$ identifies $[w\!-\!x]$ and $[c\!-\!a]$, so the trees $[w\!-\!x]\cdot T$ and $[c\!-\!a]\cdot T^*$ must be dual trees, as shown on the right.  }
\label{fig:intro}
\end{figure}
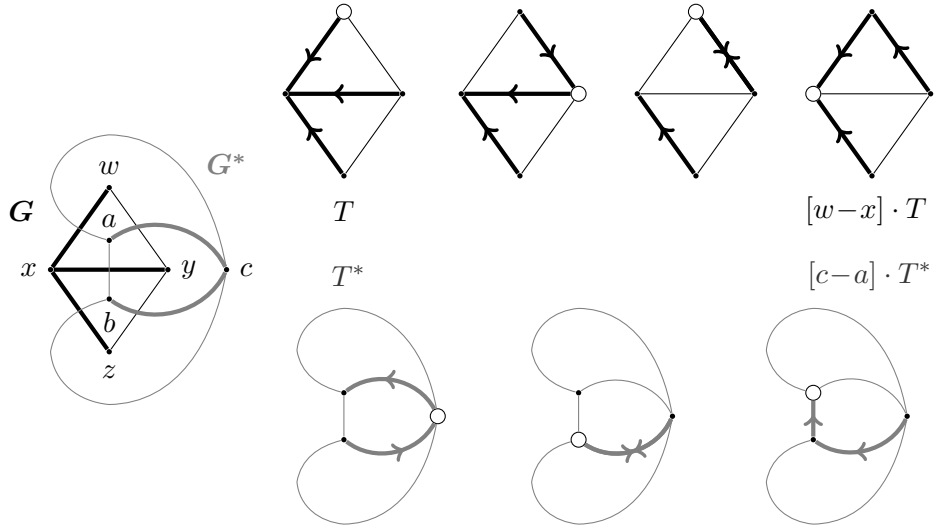

\section{Preliminaries}\label{S:prelim}
\subsection{The sandpile group}
Let $G=(V,E)$ be a finite connected loopless multigraph with vertex set $V$ and edge
multiset $E$.  The set of {\em divisors} on~$G$ is
the free abelian group on the vertices: $\Div(G) = \Z V$. We imagine a divisor
$D=\sum_{v\in V} a_v\,v$ to be an assignment of $D(v):=a_v$ chips to each
vertex $v$, keeping in mind that this number may be negative.  We write $\Div^0(G) $ for the subgroup
of divisors whose net number of chips $\sum D(v)$ is zero.

A {\em lending move} by a vertex $v$ consists of removing $\deg(v)$ chips from $v$
and distributing them along incident edges to the vertices neighboring $v$.
In other words, letting $n(v,w)$ denote the number of edges between $v$ and $w$, a lending move
by $v$ performed on a divisor $D$ produces a divisor $D'$ given by
\[
D'(w)=
\begin{cases}
  D(w) + n(v,w)&\text{if $w\neq v$}\\
  D(v)-\deg(v)&\text{if $w=v$}.
\end{cases}
\]
Notice that lending moves do not change the total number of chips in a divisor.  Divisors $D$ and $D'$ are {\em linearly equivalent}, denoted $D\sim D'$, if one can be obtained from the other by a sequence of lending moves at various vertices.
The {\em sandpile group} of $G$ is
\[
\Jac(G) = \Div^0(G)/\!\sim.
\]
The sandpile group of a graph is also variously known as the {\em Jacobian} of $G$, the {\em Picard group} $\Pic^0(G)$, or the {\em critical group} of $G$.

\subsection{Integral cuts and cycles} Fix an arbitrary orientation on the edges
$E$, and let $\Z E$ be the free abelian group on these oriented edges.  If
$e=\{u,v\}\in E$ is given the orientation $(u,v)$, we write
$e^+=\mathrm{head}(e)=v$ and $e^-=\mathrm{tail}(e)=u$.  We identify $-e$ with
the oppositely oriented edge $(v,u)$.  Each directed cycle on the underlying
undirected graph $G$ may be thought of as an element of $\Z E$, and the $\Z$-linear
span of these cycles in $\Z E$ is the {\em integral cycle space} for $G$, which we
denote by $\mathcal{C}$.

Next, for any subset $U\subset V$, the collection of all edges joining
a vertex of~$U$ to a vertex of $V\setminus U$ is called a {\em cut}.  By directing
all of these edges from vertices in $U$ to vertices in $V\setminus U$, we can identify this
cut with an element of~$\Z E$.
If $U$ consists of single vertex $v$, this cut is
called a {\em vertex cut} at $v$.
The integer span of all cuts is the {\em
integral cut space} for $G$ and is denoted by~$\mathcal{C}^*$.
Note that the vertex cuts generate the cut space.

Define \[\E(G) = \Z E/(\mathcal{C}+\mathcal{C}^*).\]  We now identify $\E(G)$ with the sandpile group $\Jac(G)$, as follows.  Define the {\em boundary map} $\Z E\to\Div^0(G)$ by sending each edge $e$ to $e^+-e^-$.
The boundary map is surjective since $G$ is connected, and its kernel is exactly the cycle space of $G$, so it identifies $\Div^0(G)$ with $\Z E/\mathcal{C}$.  Now given $D\in\Div^0(G)$, let $D_v$ be the
boundary of a vertex cut at the vertex $v$.  Then $D+D_v$ is the divisor obtained
from $D$ by performing a lending move at $v$.
Therefore the boundary map induces an isomorphism
\begin{align*}
  \partial_G\colon \E(G)&\stackrel{\cong}{\rightarrow}\Jac(G)\\
  e\ &\mapsto[e^+-e^-],
\end{align*}
as was proved in \cite[Proposition 8]{bhn}.
We will sometimes write $\del$ instead of~$\del_G$ for short.

\subsection{Rotor-routing action on spanning trees}\label{subsec:rotor}
Fix a {\em ribbon graph} structure on $G$, i.e., for each vertex $v$, fix a
cyclic ordering of the edges incident to~$v$.  Fix a vertex $q$. A {\em rotor
configuration with basepoint $q$} is the choice for each vertex $v\neq q$ of an
edge, $\rho(v)$, incident to~$v$.  We orient each edge $\rho(v)$ so that its
tail is $v$.

Let $D$ be a divisor on $G$, thought of as a chip configuration on
$G$, and let~$\rho$ be a rotor configuration with basepoint $q$.  We now recall
the {\em rotor-routing} process, by which a divisor $D$ transforms $\rho$ into a new
rotor configuration~$\rho'$.
{\em Firing} a vertex $v$ consists of updating~$\rho$ by replacing $\rho(v)$ with the
next edge in the cyclic ordering of edges at~$v$, then removing a chip from $v$
and placing it at the other end of the new edge $\rho(v)$.
Note that firing $v$ a total of
$\deg(v)$ times does not change the original rotor configuration, but transforms
$D$ by a lending move at $v$.
Now, every divisor $D$ on $G$ is linearly equivalent to a divisor $D'$ with $D'(v)\geq0$ for all $v\neq q$, see e.g.~\cite[Proposition 3.1]{bn}.  From that point, \cite{rotor} shows
that solely through vertex firings, all chips may be routed into $q$, and the rotor
configuration at the end of this process depends solely on the divisor class of
$D$.

Let $\T(G)$ denote the set of spanning trees of $G$.  Rooting
$T\in\T(G)$ at~$q$ uniquely determines a rotor configuration~$\rho_T$:  for
each vertex $v\neq q$, set~$\rho_T(v)$ to be the edge incident to $v$ on the
path in~$T$ from $v$ to $q$.  Given a divisor class $[D]\in\Jac(G)$, use the
rotor routing process to route all chips into~$q$ (at which point, all chips
will be gone since $\deg(D)=0$).  It is shown in \cite{rotor} that the resulting rotor
configuration is a spanning tree, directed into~$q$.  Call the underlying
undirected spanning tree $[D]\cdot T$.  Then according to \cite{rotor}, the resulting map
\begin{align*}
\mu_G\colon \Jac(G) \times \T(G) &\rightarrow \T(G)\\
([D],T)\quad&\mapsto [D]\cdot T
\end{align*}
is a simply transitive action of $\Jac(G)$ on $\T(G)$.

\subsection{Planar duality}  Now suppose that $G=(V,E)$ is a planar ribbon graph, and let
$G^*=(V^*,E^*)$ be its {\em planar dual graph}, whose vertices are the faces of $G$ and whose edges cross the edges of~$G$. We shall assume throughout that both $G$ and $G^*$ are loopless, i.e.,~$G$ has neither bridges nor loops.  We write $e^*$ for the edge of $G^*$ crossing the edge $e$ of $G$.  Each spanning tree of $G$ determines a spanning tree of $G^*$: namely, there is a natural bijection
\[\delta\colon \T(G) \stackrel{\cong}{\longrightarrow} \T(G^*)\]
sending $T$ to the tree $T^* = \{e^*\in E^*: e\in E\setminus T\}$.

Let us call
the orientation of the plane that agrees with the cyclic orderings of $G$ {\em clockwise}.
Then we fix once and for all the following {\em planar dual ribbon graph structure} on $G^*$: take the cyclic orderings of the edges at the vertices of $G^*$ to be {\em counter-clockwise}
with respect to the plane.

In order to define $\Z E$, we fixed an arbitrary orientation of the edges of~$G$.  To define $\Z E^*$, we will now choose a compatible orientation on the edges of~$G^*$.  For an oriented edge $e$ of $G$, let $e'$ (respectively $e''$) denote the edge at $v=e^-$ before (respectively after) $e$ in the cyclic order at $v$.  Now, call the face between $e'$ and $e$ at $v$ the face {\em before} $e$, and call the face between $e$ and $e''$ at $v$ the face {\em after} $e$.  Then we orient $e^*$ so that its head is the face of $G$ before $e$, and its tail is the face of $G$ after $e$.
For example, in Figure~\ref{fig:intro}, with the rotors of $G$ oriented clockwise relative to the page, suppose $e$ is the directed edge from $x$ to $y$.  Then $e^*$ is the directed edge in $G^*$ from $b$ to $a$.

Since directed cycles of $G$ are directed cuts of $G^*$ and vice versa, mapping each edge to its dual produces an isomorphism $\E(G)\cong\E(G^*)$, and hence we get an isomorphism $\phi$ of sandpile groups labeled as in the following commutative diagram:
 \begin{align*}
    \xymatrix{\ar[r]^{\cong} \E(G) \ar[d]_{\partial_G} &
      \E(G^*)\ar[d]^{\partial_{G^*}}\\
      \Jac(G)\ar[r]^{\phi}& \Jac(G^*).}
  \end{align*}

\section{Compatibility of rotor-routing with duality}\label{S:main}

Let $G$ be any planar ribbon graph such that both $G$ and its dual $G^*$ are loopless.  In the previous section, we established an isomorphism $\phi\colon \Jac(G) \rightarrow \Jac(G^*)$ that depended on a single global choice of orientation of the $E^*$ derived from the orientation $E$.  With respect to this choice, we may now state the main theorem of the paper:

\begin{thm}\label{thm:main}
The diagram
\begin{align*}
\xymatrix{\Jac(G)\times \T(G) \ar[r]^-{\mu_G} \ar[d]_{\phi \times \delta} & \T(G) \ar[d]^{\delta} \\
\Jac(G^*) \times \T(G^*) \ar[r]^-{\mu_{G^*}} & \T(G^*)
}
\end{align*}
commutes.  In other words, the rotor-routing action is compatible with plane duality.
\end{thm}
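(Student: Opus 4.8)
The plan is to use the fact that both routes around the square are group actions, and to reduce the commutativity to a statement about a generating set of the sandpile group, which can then be attacked by a local analysis of a single rotor-routing step. The bottom arrow $\mu_{G^*}$ makes $\Jac(G^*)$ act on $\T(G^*)$, giving a homomorphism $\beta\colon \Jac(G^*)\to\mathrm{Sym}(\T(G^*))$. Transporting $\mu_G$ through the isomorphism $\phi$ and the bijection $\delta$ produces a second action $\alpha(h)(S)=\delta\bigl(\phi^{-1}(h)\cdot\delta^{-1}(S)\bigr)$, which is again a homomorphism $\alpha\colon\Jac(G^*)\to\mathrm{Sym}(\T(G^*))$ precisely because $\mu_G$ is an action. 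The theorem asserts exactly that $\alpha=\beta$. Since two homomorphisms out of $\Jac(G^*)$ that agree on a generating set are equal, and since the presentation $\Jac(G^*)\cong\E(G^*)=\Z E^*/(\mathcal C+\mathcal C^*)$ shows that $\Jac(G^*)$ is generated by the edge classes $\partial_{G^*}(e^*)$, it suffices to check $\alpha=\beta$ on each such generator.

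Unwinding the definitions and using the commutative square $\phi\circ\partial_G=\partial_{G^*}\circ(e\mapsto e^*)$, one finds $\alpha(\partial_{G^*}(e^*))(T^*)=(\partial_G(e)\cdot T)^{*}$ while $\beta(\partial_{G^*}(e^*))(T^*)=\partial_{G^*}(e^*)\cdot T^{*}$, so the whole theorem collapses to
\[
(\partial_G(e)\cdot T)^{*}=\partial_{G^*}(e^{*})\cdot T^{*}\qquad\text{for every edge } e\in E \text{ and every } T\in\T(G).
\]
To compute the left-hand side I would use basepoint-independence to take the root to be $q=e^{-}$, so that the divisor $\partial_G(e)=[e^{+}-e^{-}]$ becomes a single chip at $e^{+}$ to be routed to $q$ on the rotor configuration $\rho_T$. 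The resulting rotor walk is genuinely dynamical: firing advances a rotor \emph{before} moving the chip, so the chip does not simply slide down the tree path to $q$ but explores $G$ before the configuration settles into the new spanning tree $\partial_G(e)\cdot T$. The aim is then to identify the set of vertices whose rotors were advanced — a cycle/path in $G$ recording the trajectory of the walk — cleanly enough that it can be dualized, and to run the analogous walk for $e^{*}$ on $(G^{*},T^{*})$ to produce $\partial_{G^*}(e^{*})\cdot T^{*}$.

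The hard part will be this last comparison. The orientation conventions fixed in Section~\ref{S:prelim} — clockwise rotors on $G$ versus counter-clockwise on $G^{*}$, together with the before/after rule orienting $e^{*}$ — are exactly what must be exploited so that the cycle recording the rotor advances in $G$ corresponds, under $e\mapsto e^{*}$, to the cut recording the rotor advances in $G^{*}$, with signs matching $\phi$. Tracing an entire rotor walk and its dual move by move is unwieldy, so the cleaner route is to attach to the ordered pair $(T,\,\partial_G(e)\cdot T)$ a single geometric quantity built from the planar embedding that (i) determines the sandpile element carrying $T$ to $\partial_G(e)\cdot T$, and (ii) is manifestly preserved by $\delta$ together with the local before/after data. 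Establishing such an invariant — morally an \emph{angle} between the two trees — and verifying both of its properties is where essentially all the difficulty lies; once it is in hand, the reduced identity above, and hence the commutativity of the square, follows at once.
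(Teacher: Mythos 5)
Your first two paragraphs are sound: the conjugation argument showing that the diagram commutes if and only if the two homomorphisms $\alpha,\beta\colon\Jac(G^*)\to\mathrm{Sym}(\T(G^*))$ agree, and the reduction via the generating set $\{\del_{G^*}(e^*)\}$ to the single identity $(\del_G(e)\cdot T)^* = \del_{G^*}(e^*)\cdot T^*$, are both correct (and this generator reduction is a step the paper does not take --- the paper handles arbitrary $[D]$ at once). Note also that your passing appeal to basepoint-independence from \cite{CCG} is doing real work even in making the square well-posed. However, the proposal then stops precisely where the proof begins. You posit an invariant of a pair of trees satisfying (i) it determines the sandpile element carrying one tree to the other, and (ii) it commutes with duality --- and you explicitly defer both: ``verifying both of its properties is where essentially all the difficulty lies.'' Those two properties are, respectively, Lemma~\ref{lem:activated edges} and Lemma~\ref{lem:duality} of the paper, and together with the injectivity statement of Corollary~\ref{cor:zero angle} (needed to pass from equality of invariants back to equality of trees, via simple transitivity from \cite{rotor}) they constitute essentially the entire content of Section~\ref{S:main}. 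A proof must actually construct the invariant --- here, the local angle $\angle^u(e,e')$ as a sum of boundaries of the intermediate edges in the cyclic order, summed over non-root vertices --- prove (i) by the activated-edges bookkeeping (full rotor rotations at a vertex are lending moves, so only the residual angle survives), and prove (ii) by the geometric argument: reduce to trees differing in a single edge swap with $e,e'$ bordering a common face, telescope $\phi(\angle(T,T'))$ via Lemma~\ref{lem:dual angle}, and cancel against the vanishing dual cut $\del_{G^*}(C^*)=0$ to recover the single vertex angle at the face $f$ in $G^*$. None of this is present or even sketched in enough detail to check.

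Two further points. First, your plan implicitly requires that the angle be independent of the root: in (ii) the natural root for the trees in $G$ (a vertex of $G$) and the natural root for the dual trees (a vertex of $G^*$, i.e., a face of $G$) cannot be matched up, so root-independence --- which by Remark~\ref{rem: root dependence} is \emph{equivalent} to planarity via \cite{CCG} --- is an essential ingredient of step (ii), not merely the convenience you use it for when choosing $q=e^-$. Second, the middle paragraph's attempt to track the single-chip rotor walk and its set of advanced rotors as ``a cycle/path in $G$'' is the one step that would actively fail as stated: rotors may complete several full turns during the walk, so the record of the process is a multiset of activated edges (with full rotations contributing lending moves that vanish in $\Jac(G)$), not a trajectory that dualizes edge-by-edge; you are right to abandon that route, but what replaces it is exactly the unproven machinery above. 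In sum: the strategic skeleton matches the paper's, and the reductions you do carry out are valid, but the proposal is a roadmap with the load-bearing lemmas left as conjectures, so it does not constitute a proof.
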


In the rest of this section, we prove Theorem~\ref{thm:main}.
We begin with a topological definition of the angle between two spanning trees; this definition applies to all ribbon graphs, not just planar ones, and is the key idea in our proof of Theorem~\ref{thm:main}.

Suppose $G$ is any ribbon graph, and let  $e$ and $e'$ be directed edges emanating from a vertex $u$.  Suppose that in the cyclic order starting from $e=e_0$, the edges between $e$ and $e'$ are $e_0,e_1,\dots, e_k$ where $e_k=e'$, all directed outward from $u$. Define the {\em angle} between $e$ and $e'$ at $u$ by
\[
\angle^u(e,e')=\sum_{i=1}^k \del e_i\in \Jac(G).
\]
Recall that $\del$ denotes the boundary map sending a directed edge $e$ to the element $[e^+-e^-]\in\Jac(G)$.  Note that the sum includes $e'$ but not $e$.

\begin{figure}[h]
\begin{center}
\begin{tikzpicture}
    \draw[->,>=mytip] (0:0) -- (0:2) node[label={[label distance=1mm]0:{$e_0$}}]{};
  \draw[->,>=mytip] (0:0) -- (-20:2) node[label={[label distance=1mm]-20:{$e_1$}}]{};
  \draw[->,>=mytip] (0:0) -- (-40:2) node[label={[label distance=1mm]-40:{$e_2$}}]{};
  \draw[dotted,thick,-latex] (-56:1.8) arc (-56:-74:1.8);
  \draw[->,>=mytip] (0:0) -- (-90:2) node[label={[label distance=1mm]-90:{$e_k$}}]{};
\end{tikzpicture}
\end{center}
\caption{$\angle^u(e_0,e_k)=\del e_1+\dots+\del e_k$.}
\end{figure}

\begin{lem}\label{lem:dual angle}
Suppose $G$ is a planar ribbon graph, and let $e_0,\ldots,e_k$ be consecutive outgoing edges from some vertex $u$ in the cyclic order at $u$.  For $i=0, \ldots, k$, let $r_i$ be the face of $G$, equivalently the vertex of $G^*$, lying to the right of $e_i$ (with respect to the cyclic order at $u$).  Then
\[\phi(\angle^u(e_0,e_k)) = [r_0 - r_k] \in \Jac(G^*).\]
\end{lem}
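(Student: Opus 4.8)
The plan is to translate both sides of the claimed identity through the boundary-map isomorphisms $\del_G$ and $\del_{G^*}$ and reduce the statement to a combinatorial identity in $\Z E^*$, where the duality map sends each edge $e_i$ to its dual $e_i^*$. By definition $\angle^u(e_0,e_k) = \sum_{i=1}^k \del e_i = \del_G\!\left(\sum_{i=1}^k e_i\right)$, and recalling the commutative diagram defining $\phi$, we have $\phi(\angle^u(e_0,e_k)) = \del_{G^*}\!\left(\sum_{i=1}^k e_i^*\right)$. So the real content is to show that $\del_{G^*}\!\left(\sum_{i=1}^k e_i^*\right) = [r_0 - r_k]$ in $\Jac(G^*)$, i.e., that the chain $\sum_{i=1}^k e_i^*$ has boundary equal (as a divisor) to $r_0 - r_k$ modulo the cycle space of $G^*$.

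First I would carefully unwind the orientation convention for dual edges. The edges $e_0, \ldots, e_k$ are consecutive in the cyclic (clockwise) order at $u$, so for each consecutive pair $e_{i-1}, e_i$ there is a single face $r_{i-1}$ wedged between them — precisely the face to the right of $e_i$ and to the left of $e_{i-1}$ in the clockwise convention. The key orientation fact, read off from the definition of $e^*$ (head $=$ face before $e$, tail $=$ face after $e$), is that the dual edge $e_i^*$ has head and tail equal to the two faces flanking $e_i$ at $u$; I would verify that with our conventions $\del_{G^*}(e_i^*) = [\,r_{i-1} - r_i\,]$, where $r_{i-1}$ is the face before $e_i$ and $r_i$ is the face after $e_i$. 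Here the hypothesis that $G^*$ is loopless matters: it guarantees $r_{i-1} \neq r_i$, so each $e_i^*$ is genuinely a non-loop edge and its boundary is as stated.

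Granting this per-edge identity, the sum telescopes:
\[
\del_{G^*}\!\left(\sum_{i=1}^k e_i^*\right) \;=\; \sum_{i=1}^k [\,r_{i-1} - r_i\,] \;=\; [\,r_0 - r_k\,],
\]
which is exactly the claim. The main obstacle is getting the orientation bookkeeping exactly right — in particular confirming that ``right of $e_i$'' in the statement coincides with the ``face after $e_{i-1}$ $=$ face before $e_i$'' description coming from the dual-orientation convention, so that the signs align and the sum telescopes rather than accumulates. This is a purely local check at the vertex $u$, independent of the global structure of $G$, and once the clockwise/counter-clockwise conventions are pinned down it should be routine; the geometric picture of $k$ consecutive spokes with a single face in each gap makes the telescoping transparent.
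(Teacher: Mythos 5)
Your proposal is correct and follows essentially the same route as the paper's own (very short) proof: the per-edge identity $\phi(\del e_i) = [r_{i-1}-r_i]$, which holds because with the dual-orientation convention $e_i^*$ has head the face before $e_i$ (namely $r_{i-1}$) and tail the face after $e_i$ (namely $r_i$), followed by the telescoping sum $\sum_{i=1}^k [r_{i-1}-r_i] = [r_0 - r_k]$. Two harmless slips to note: the wedge face between $e_{i-1}$ and $e_i$ lies to the \emph{right} of $e_{i-1}$ and the \emph{left} of $e_i$ (your prose states it the other way around, both early on and in the closing paragraph, though the displayed identity you actually verify is the correct one, so the telescoping is unaffected), and looplessness of $G^*$ is not needed for the per-edge identity, since a loop dual edge would contribute $0=[r_{i-1}-r_i]$ anyway.
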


\begin{proof}
By definition, $\phi(\del e_i) = [r_{i-1}-r_i]$.  By linearity, it follows that $\phi(\angle^u(e_0,e_k))$ is the telescoping sum $[(r_0-r_1) + (r_1 -r_2)+\cdots + (r_{k-1}-r_k)]$, proving the claim.
\end{proof}

\begin{defn}\label{def:angle} Let $G$ be an arbitrary ribbon graph, and let $T$ and $T'$ be two spanning trees of $G$.  Let $v \in V$ be any vertex.  As in \S \ref{subsec:rotor}, let $\rho_T$ and~$\rho_{T'}$ be the rotor configurations based at $v$ arising from orienting $T$ and $T'$ towards $v$.

The {\em angle} between~$T$ and $T'$ based at $v$, denoted $\angle_v(T,T')$, is the sum of the angles between their edges at each non-root vertex.  That is,

\[ \angle_v(T,T') := \sum_{u\in V\setminus \{v\}} \angle^u (\rho_T(u), \rho_{T'}(u) ) \in \Jac(G).\]
\end{defn}

\begin{lem}\label{lem:activated edges}
Let $G$ be any ribbon graph, and let $T$ be a spanning tree of $G$.
For any vertex $v$ and any $[D]\in \Jac(G)$, we have \[\angle_v(T,[D]\cdot T)) = [-D].\]
Here, the rotor-routing action of $[D]$ on $T$ is computed with respect to the basepoint~$v$.
\end{lem}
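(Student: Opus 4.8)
\textbf{Proof proposal for Lemma~\ref{lem:activated edges}.}

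The plan is to reduce the statement to the behavior of the angle under a single firing move, and then to track the total effect of the rotor-routing algorithm that computes $[D]\cdot T$. First I would recall the mechanics established in \S\ref{subsec:rotor}: starting from the rotor configuration $\rho_T$ and the divisor $D$, we route all chips into $v$ via vertex firings, and the terminal rotor configuration is exactly $\rho_{[D]\cdot T}$. The key observation is that a single firing at a non-root vertex $u$ advances $\rho(u)$ by one step in the cyclic order, from some edge $e$ to the next edge $e''$, while simultaneously moving one chip from $u$ to the head of $e''$. Under the angle, advancing the rotor at $u$ by one notch contributes exactly $\del e''$ to $\angle_v(T,\rho)$, since the angle sums $\del$ over the edges strictly between the $T$-rotor and the current rotor. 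On the other hand, that same firing changes the divisor at the head of $e''$ by $+1$ and at $u$ by $-1$, i.e.\ it modifies the chip configuration by $e''^{\,+} - e''^{\,-} = \del e''$ under the boundary map. So each elementary firing changes the ``running angle'' by $+\del e''$ and changes the current divisor class by $-\del e''$ (the chip leaves $u$ and lands at the head), and these two increments are negatives of one another.

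The heart of the argument is therefore a conservation law: define, along the rotor-routing process, the running quantity $A = \angle_v(T,\rho_{\mathrm{current}}) + [D_{\mathrm{current}}]$, where $D_{\mathrm{current}}$ is the chip configuration at the present stage and $\rho_{\mathrm{current}}$ the present rotor configuration. I would show that $A$ is invariant under every firing. At the start, $\rho_{\mathrm{current}} = \rho_T$, so $\angle_v(T,\rho_T) = 0$ (each summand $\angle^u(\rho_T(u),\rho_T(u))$ is an empty sum), and $D_{\mathrm{current}} = D$, giving $A = [D]$. At the end, all chips have been routed into $v$, so $D_{\mathrm{current}}$ is supported entirely at the root $v$; since the total chip count is zero, in fact $D_{\mathrm{current}} = 0$, hence $[D_{\mathrm{current}}] = 0$. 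Meanwhile $\rho_{\mathrm{current}} = \rho_{[D]\cdot T}$, so $A = \angle_v(T,[D]\cdot T)$. Equating the two values of $A$ yields $\angle_v(T,[D]\cdot T) = [D]$, which is off by a sign from the claim.

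The sign is the point I would look at most carefully, and it is where the main subtlety lies. The discrepancy must come from an orientation convention: the edge $\rho(u)$ is oriented with its tail at $u$, so the rotor points \emph{outward} from $u$, whereas a chip routed during firing travels \emph{along} the new rotor edge to its head. Depending on whether the angle $\angle^u$ is read from $\rho_T(u)$ to $\rho_{\mathrm{current}}(u)$ or in the opposite cyclic direction, and on whether one tracks $D_{\mathrm{current}}$ or $-D_{\mathrm{current}}$, the conserved quantity becomes $\angle_v(T,\rho_{\mathrm{current}}) - [D_{\mathrm{current}}]$ instead. With the orientation conventions fixed in \S\ref{subsec:rotor} and in Definition~\ref{def:angle}, a single firing at $u$ sends a chip \emph{out} of $u$ toward the head of the advanced rotor, and I expect the bookkeeping to show the increment to the angle and the increment to the divisor class carry the same sign, forcing the conserved quantity to be $\angle_v(T,\rho_{\mathrm{current}}) - [D_{\mathrm{current}}]$ and hence the final identity $\angle_v(T,[D]\cdot T) = [-D]$. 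I would verify this sign explicitly on a single firing against the definitions of $\del$ and of $\angle^u$, since getting it right is the only genuine obstacle; everything else is the telescoping invariance and the reduction to the $D'(v)\geq 0$ representative guaranteed by \cite[Proposition 3.1]{bn}, after which the process terminates and the endpoint analysis above applies verbatim.
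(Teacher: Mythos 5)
Your strategy is the paper's proof in telescoped form: the paper sums $\del e$ over the multiset of activated edges and observes that $[D]+\sum_e \del e = 0$ while $\angle_v(T,[D]\cdot T)=\sum_e\del e$; your per-firing ``conservation law'' is exactly this identity checked one firing at a time. But as written the proposal has two problems. The first is self-inflicted: your opening paragraph correctly computes that a firing changes the chip configuration by $e''^{\,+}-e''^{\,-}=\del e''$, i.e.\ by $+\del e''$ --- and then the very same paragraph asserts that the divisor class changes by $-\del e''$ and that ``these two increments are negatives of one another.'' That second claim contradicts your own computation two sentences earlier, and it is what steers you to the wrong invariant $A=\angle_v(T,\rho_{\mathrm{current}})+[D_{\mathrm{current}}]$ and the answer $[D]$. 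There is nothing left to ``verify explicitly'': both increments equal $+\del e''$, so the conserved quantity is $\angle_v(T,\rho_{\mathrm{current}})-[D_{\mathrm{current}}]$, which equals $-[D]$ at the start and $\angle_v(T,[D]\cdot T)$ at the end (where, as you correctly note, $D_{\mathrm{current}}=0$ since the degree is zero). Your third paragraph's hedging should simply be replaced by this one-line check.

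The second problem is a genuine gap. Your justification of the per-notch increment --- ``the angle sums $\del$ over the edges strictly between the $T$-rotor and the current rotor'' --- fails at any firing in which the rotor at $u$ advances past $\rho_T(u)$, i.e.\ completes a full turn: at that step the angle as defined drops from $\sum_{i=1}^{\deg(u)-1}\del e_i$ back to the empty sum $0$, rather than gaining $\del e''$. Since vertices are typically fired far more than $\deg(u)$ times in the routing process, these wraparound steps cannot be avoided, and without handling them your invariance claim is false as justified. It is rescued by the identity that the sum of $\del$ over \emph{all} outgoing edges at $u$ vanishes in $\Jac(G)$, being the boundary of the vertex cut at $u$ (a lending move); with that, the increment at a wraparound step is $\del e'' - \sum_{\text{all }g\text{ at }u}\del g = \del e''$ in $\Jac(G)$ as needed. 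This identity is precisely the one nontrivial observation in the paper's proof (``at each vertex $u\ne v$, the sum of the boundaries of all outgoing edges $e$ at $u$ is $0\in\Jac(G)$''); once you insert it, your argument closes up and coincides with the paper's.
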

\begin{proof} Without loss of generality, we may choose $D$ to be a chip configuration that is nonnegative at vertices other than $v$.  Consider the rotor-routing process that calculates $[D]\cdot T$.  We will say that the directed edge $(x,y)$ is {\em activated} if a chip is sent from vertex~$x$ to vertex $y$ during this  process.  Note that when the chip is fired, the chip configuration on the graph changes by $\del(x,y) = y-x$.  Since at the end of the rotor-routing process there are no chips left on the graph, it follows that
\[
[D] + \sum_e\del e = 0,
\]
where the sum is over the multiset of edges that have been activated during the process.

Next, we claim that the angle between $T$ and $[D]\cdot T$ is in fact equal to $\sum_e \del e$, where the sum is again over the multiset of activated edges.  This is because at each vertex $u \ne v$, the sum of the boundaries of all outgoing edges $e$ at $u$ is $0\in\Jac(G)$; after all, this sum corresponds to a lending move at $u$.  So the sum over all activated edges leaving $u$ is exactly the angle at~$u$ between the edge of $T$ leaving $u$ and that of $T'$, and the claim follows.  Summarizing, we have
\[
\angle_v(T,[D]\cdot T)) = \sum_e \del e = [-D].
\]
\end{proof}

\begin{cor}\label{cor:zero angle}
Let $G$ be any planar ribbon graph, and let $T$ and $T'$ be spanning trees of $G$ rooted at the same vertex $v$.  Then $\angle_v(T,T')=0$ if
  and only if $T=T'$.
\end{cor}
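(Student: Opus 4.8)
The plan is to read the corollary off directly from Lemma~\ref{lem:activated edges}, combined with the fact, recalled in \S\ref{subsec:rotor}, that the rotor-routing action of $\Jac(G)$ on $\T(G)$ based at $v$ is simply transitive. The point is that Lemma~\ref{lem:activated edges} exhibits the angle as a way of \emph{inverting} the action: the angle between $T$ and $T'$ remembers exactly the sandpile-group element carrying one tree to the other, so detecting a zero angle is the same as detecting that this element is trivial.

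Concretely, I would first invoke simple transitivity to produce the unique class $[D]\in\Jac(G)$ with $[D]\cdot T = T'$, where the action is taken with respect to the common basepoint $v$. Lemma~\ref{lem:activated edges} then evaluates the angle in terms of this class, giving $\angle_v(T,T') = \angle_v(T,[D]\cdot T) = [-D]$. With this identity both implications are immediate. If $T=T'$, then $[D]=0$ by uniqueness (the identity $[0]$ fixes $T$), so $\angle_v(T,T')=[-0]=0$. Conversely, if $\angle_v(T,T')=0$, then $[-D]=0$, hence $[D]=0$, and therefore $T' = [0]\cdot T = T$ since $[0]$ acts as the identity.

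I do not expect a genuine obstacle here: all the content is carried by Lemma~\ref{lem:activated edges}, and the corollary merely records that zero angle detects equality of trees. The one step to state cleanly is the equivalence of $[D]=0$ with $T=T'$, which is precisely the injectivity packaged into a simply transitive action (the orbit map $[D]\mapsto[D]\cdot T$ is a bijection $\Jac(G)\to\T(G)$). It is worth noting that planarity is not essential to this argument—Lemma~\ref{lem:activated edges} and simple transitivity both hold for an arbitrary ribbon graph with a fixed basepoint; planarity serves only to make the relevant action the canonical, basepoint-independent one, which is the setting in which the corollary will be applied.
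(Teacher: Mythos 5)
Your argument is essentially identical to the paper's: the authors likewise take the unique $[D]$ with $[D]\cdot T = T'$ (citing the simple transitivity result of Holroyd et al.\ \cite[Lemma 3.17]{rotor}) and apply Lemma~\ref{lem:activated edges} to conclude $[D]=0$, dismissing the converse as clear. Your closing observation is also correct---the proof never uses planarity, which is only the setting in which the corollary is applied via the basepoint-independence of Remark~\ref{rem: root dependence}.
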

\begin{proof} Assume that $\angle_v(T,T')=0$, and let $[D]\in \Jac(G)$ take $T$ to $T'$ under the rotor-routing action with basepoint $v$.  It follows from \cite[Lemma 3.17]{rotor} that the element $[D]$ exists and is unique.  Then by Lemma~\ref{lem:activated edges}, $[D]=0$, so $T = T'$.  The converse is clear.
\end{proof}

\begin{rem}\label{rem: root dependence}
It follows from Lemma~\ref{lem:activated edges} and from \cite[Theorem 2]{CCG} that the notion of angle between trees for $G$ is independent of the choice of root vertex for the trees if and only if $G$ is a planar ribbon graph. Indeed, Lemma~\ref{lem:activated edges} shows that $\angle_v(T,T')$ is exactly the element of $\Jac(G)$ sending $T'$ to $T$ in the rotor-routing action based at $v$, and the rotor-routing action is basepoint-independent if and only if $G$ is a planar ribbon graph by \cite{CCG}.
Thus, if $G$ is planar, we will henceforth write $\angle(T,T')$ for the angle between $T$ and $T'$, computed with respect to any vertex.
\end{rem}

We can now prove our main lemma.

\begin{lem}\label{lem:duality}
Let $G$ be a planar ribbon graph, and
let $T$ and $T'$ be spanning trees of $G$. Then
\[
\phi(\angle(T,T'))=\angle(T^*,T'^*)\,.
\]
\end{lem}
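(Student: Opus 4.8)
The starting point must be to evaluate the left-hand side with the one computational tool in hand, Lemma~\ref{lem:dual angle}. Fix a root $v$ of $G$, write $\rho_T,\rho_{T'}$ for the rotor configurations based at $v$, and let $R_T(u)$ denote the face of $G$ lying to the right of the parent edge $\rho_T(u)$ (oriented toward $v$). Applying $\phi$ to Definition~\ref{def:angle} term by term, Lemma~\ref{lem:dual angle} gives
\[
\phi(\angle(T,T'))=\sum_{u\neq v}\phi\bigl(\angle^u(\rho_T(u),\rho_{T'}(u))\bigr)=\sum_{u\neq v}\bigl[R_T(u)-R_{T'}(u)\bigr]\in\Jac(G^*).
\]
So the entire problem is to recognize this explicit divisor on the faces of $G$ as a representative of $\angle(T^*,T'^*)$.

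To make the comparison tractable I would first reduce to the case where $T$ and $T'$ differ by a single edge pivot $T'=T-f+g$. This is legitimate for three reasons: the angle is additive in its second argument, since $\angle^u(e,e'')=\angle^u(e,e')+\angle^u(e',e'')$ in $\Jac(G)$ (a full turn at $u$ is a vertex cut, hence zero), so $\angle(T,T'')=\angle(T,T')+\angle(T',T'')$, and the same holds on $G^*$; the maps $\phi$ and $\delta$ are a homomorphism and a bijection; and a pivot dualizes to a pivot, $T'^{*}=T^{*}-g^{*}+f^{*}$. Since the spanning-tree exchange graph is connected, summing the one-pivot identity along a chain of pivots from $T$ to $T'$ yields the general case.

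For a single pivot, write $f=\{p,c\}$ and $g=\{a,b\}$, so that $T-f$ has components $A\ni p,a$ and $B\ni c,b$. Using basepoint independence (Remark~\ref{rem: root dependence}) I would root $G$ at $p$; then $\rho_T$ and $\rho_{T'}$ agree except along the spine $c=s_0,s_1,\dots,s_m=b$, the path of $T$ inside $B$, because re-rooting the subtree $B$ from $c$ to $b$ alters parent edges only along that path. Feeding the spine into Lemma~\ref{lem:dual angle} and collapsing the resulting telescoping sum, one finds
\[
\phi(\angle(T,T'))=[P-Q]+\sum_{i=1}^{m}\del_{G^*}\eta_i^{*},
\]
where $\eta_i=\{s_{i-1},s_i\}$ is the $i$-th spine edge and $P,Q$ are the faces to the right of $f$ and $g$. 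Rooting $G^{*}$ at an endpoint of $g^{*}$ and running the mirror-image argument on the dual pivot $T^{*}\to T'^{*}$ produces an analogous closed form for $\angle(T^{*},T'^{*})$ supported on the dual spine, namely the path of $T^{*}$ joining the endpoints of $f^{*}$.

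The hard part is the reconciliation of these two closed forms. The key geometric fact is that the spine edges $\eta_1,\dots,\eta_m$ form part of the fundamental cycle that $g$ creates in $T$, whose planar dual is a cut of $G^{*}$ and therefore vanishes in $\Jac(G^*)$; dually, the dual spine lies in a fundamental cycle of $T^{*}$ whose dual is a cut of $G$. Using these two relations I would rewrite $\sum_i\del_{G^*}\eta_i^{*}$ in terms of $\del_{G^*}f^{*}$, $\del_{G^*}g^{*}$, and the duals of the $A$-path from $a$ to $p$, and check that, together with the boundary term $[P-Q]$, the result is exactly the dual-side expression. I expect the genuine difficulty — and the likeliest source of error — to be the orientation bookkeeping in this final matching: the clockwise structure on $G$ against the counter-clockwise structure on $G^{*}$, the fixed edge orientations that define $\del_{G^*}$, and the ``face before/after'' convention orienting each $e^{*}$ all enter, and a single sign slip would flip the identity. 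Getting these conventions to line up so that the primal ``spine defect'' cancels against the dual one is the crux of the proof.
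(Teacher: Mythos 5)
Your setup and reductions are sound, and they track the first half of the paper's own argument: additivity of the angle at each vertex (a full turn is a vertex cut, hence zero in $\Jac(G)$) legitimately reduces the claim to a single pivot $T'=T\setminus\{e\}\cup\{e'\}$, a pivot does dualize to a pivot, and your spine analysis together with Lemma~\ref{lem:dual angle} does yield a closed form of the shape $\phi(\angle(T,T'))=[P-Q]+\sum_i \del_{G^*}\eta_i^*$, up to the orientation signs you flag. But the proposal stops exactly where the content of the lemma begins: you never carry out the reconciliation of the primal and dual closed forms. You announce that you ``would rewrite'' the spine sum via the cut relation coming from the dual of the fundamental cycle and ``check'' that the result matches the dual-side expression, and you yourself identify the sign bookkeeping in that step as the likeliest point of failure. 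As written, this is a plan whose decisive step is unexecuted, so it is a genuine gap rather than a proof.

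The idea you are missing --- the one that makes the paper's final matching nearly effortless --- is a \emph{second} reduction: after reducing to a single pivot, the paper walks along the fundamental cycle of $e^*$ in $T^*$ to replace the pivot by a chain of pivots in each of which the two exchanged edges $e,e'$ are incident to a \emph{common face} $f$. In that configuration, rooting the primal trees at an endpoint $x'$ of $e'$ makes the rotor configurations differ only along the $y'$--$y$ path, and Lemma~\ref{lem:dual angle} telescopes the angle to $(f_1-h_0)+(f_2-h_1)+\cdots+(f-h_{m-1})$; adding a \emph{single} cut relation $\del_{G^*}(C^*)=0$, where $C$ runs through $e'$, the $y'$--$y$ path in $T$, and the boundary of $f$, leaves the sum $\sum_{i\geq m}(h_i-f)$. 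Crucially, on the dual side one then roots $T^*$ and $T'^*$ at a vertex $u$ of the dual fundamental cycle \emph{different from} $f$ (using Remark~\ref{rem: root dependence}), so that $\angle(T^*,T'^*)$ is concentrated at the single dual vertex $f$ and is \emph{visibly} equal to that same sum --- no second spine computation and essentially no clockwise-versus-counterclockwise bookkeeping is needed. Your general matching, with two spine sums, two cut relations, and the orientation asymmetry between $G$ and $G^*$ all in play, may well be completable, but it is precisely the hard computation the paper's common-face reduction is designed to avoid, and your proposal leaves it entirely open.
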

\begin{proof}
Given a spanning tree $T$ and an edge $e$ not in $T$, we call the unique cycle $C(e)$ in $T\cup \{e\}$ the {\em fundamental cycle} of $e$ with respect to $T$.  We first note that there is a sequence of trees $T=T_0,T_1,\ldots,T_r=T'$ such that for each $j$, the trees $T_{j+1}$ and $T_j$ have exactly $n-1$ edges in common.  If $T=T'$ this statement is vacuously true. Otherwise, pick $e'\in T'\setminus T$; then the fundamental cycle of $e'$ with respect to $T$ must contain some edge $e \in T\setminus T'$.  Set $T_1 = T\cup\{e'\} \setminus \{e\}.$  Then $T_1$ and $T'$ have smaller symmetric difference, so repeating, we produce a sequence of spanning trees as desired.  It follows by induction that we may assume $T' = T\cup\{e'\}\setminus\{e\}$.

In fact, we may further assume, again by induction, that $e$ and $e'$ are edges incident to a common
  face of $G$.  Indeed, since $T^*\cup \{e^*\}\setminus \{e'^*\} = T'^*$ is acyclic, the fundamental cycle $C(e^*)$ of $e^*$ with respect to $T^*$ contains $e'^*$.  Now starting at $e^*$ and proceeding along the cycle $C(e^*)$ in either direction, let $e^*=e_0^*, e_1^*, \ldots, e_s^* = e'^*$ be the sequence of edges traversed.
Then \[T^*,~(T^*\cup \{e^*\})\!\setminus\!\{e_1^*\},~(T^*\cup
\{e^*\})\!\setminus\!\{e_2^*\},\ldots,~(T^*\cup \{e^*\})\!\setminus\!\{e_s^*\}\]
is a sequence of trees in $G^*$ such that the symmetric difference of any consecutive pair of trees consists of two edges of $G^*$ adjacent to the same vertex.  Now passing to $G$, we conclude that
\[T,~(T\cup \{e_1\})\!\setminus\!\{e\},~(T\cup \{e_2\})\!\setminus\!\{e\},\ldots,~(T\cup \{e'\})\!\setminus\!\{e\}\]
is a sequence of trees in $G$ such that the symmetric difference of any consecutive pair of trees consists of two edges of $G$ incident to the same face.

Thus from here on, we assume that $T' = (T\cup\{e'\})\setminus\{e\}$, where $e, e'\in E(G)$ are incident to a common face, which we call $f$. Write $e = xy$ and $e' = x'y'$ for vertices $x,y,x',y'$ of $V(G)$, such that $f$ is to the left of the edge~$e$ when it is traversed in the direction $x\rightarrow y$, and $f$ is to the right of the edge~$e'$ when it is traversed in the direction $x'\rightarrow y'$.
Write $\mathcal{C}$ for the fundamental cycle in
  $T\cup\{e'\}$; it is illustrated in Figure~\ref{fig:notation}.
(Here and throughout the rest of the proof, we assume a clockwise orientation on the rotors of $G$ simply in order to talk about the left and right sides of an edge freely.  For example, the face to the right of an oriented edge $e = (x,y)$ should be interpreted as the face coming in between $e$ and the edge after $e$ in the cyclic order at $x$.)

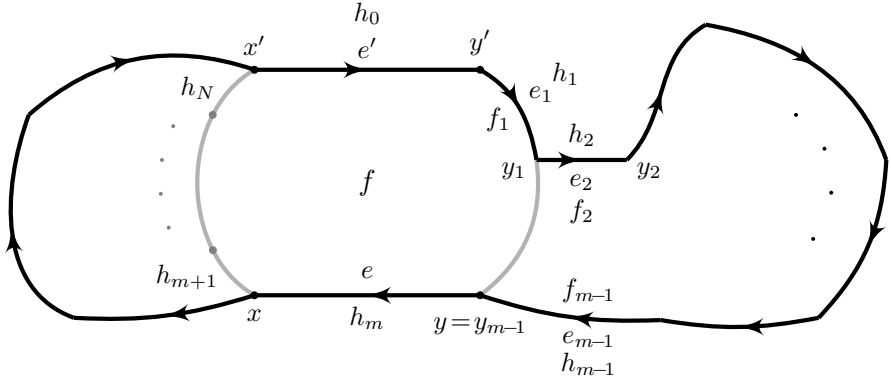
\begin{figure}
\begin{tikzpicture}[scale=3]
  \node at (1.08,0.78) {\small $f_1$};
  \node at (1.15,0.55) {\small $y_1$};
  \node at (1.27,0.89) {\small $e_1$};
  \node at (1.38,0.98) {\small $h_1$};
  \node at (1.45,0.71) {\small $h_2$};
  \node at (1.45,0.50) {\small $e_2$};
  \node at (1.45,0.37) {\small $f_2$};
  \node at (1.75,0.55) {\small $y_2$};
  \node at (1.48,0.02) {\small $f_{m\!-\!1}$};
  \node at (1.48,-0.2) {\small $e_{m\!-\!1}$};
  \node at (1.48,-0.31) {\small $h_{m\!-\!1}$};
  \node at (0.5,0.1) {\small $e$};
  \node at (0.5,-0.1) {\small $h_m$};
  \node at (-0.25,0.92) {\small $h_N$};
  \node at (-0.3,0.08) {\small $h_{m+1}$};
  \draw[ultra thick,myarrow] (1,0) to node[midway] {} (0,0);
  \draw[ultra thick,myarrow] (0,1) to node[black,midway,above] {\small $e'$} (1,1);
  \draw[black] (0,1)--(1,1);
  \node at (0.5,1.25) {\small $h_0$};
  \draw[ultra thick,gray!60] (0,1) to[bend right=60] (0,0);
  \draw[ultra thick,myarrow] (1,1) to[out=-30,in=100] (1.25,0.6);
  \draw[ultra thick,gray!60] (1.25,0.6) to[bend left=30] (1,0);
  \node at (0.5,0.5) {$f$};
  \draw[ultra thick,myarrow] (1.8,-0.11) to[bend left=10] (1,0);
  \draw[ultra thick,myarrow] (2.5,-0.1) to[bend left=10] (1.8,-0.11) ;
  \draw[ultra thick,myarrow] (2.8,0.6) to[bend left=20] (2.5,-0.1);
  \draw[ultra thick,myarrow] (2.0,1.2) to[out=-10,in=110] (2.8,0.6);
  \draw[ultra thick,myarrow] (1.65,0.6) to[out=45,in=210] (2.0,1.2);
  \draw[ultra thick,myarrow] (1.25,0.6) to (1.65,0.6);
  \draw[ultra thick,myarrow] (0,0) to[bend left=10] (-0.8,-0.1);
  \draw[ultra thick,myarrow] (-0.8,-0.1) to[out=160,in=250] (-1.0,0.8);
  \draw[ultra thick,myarrow] (-1.0,0.8) to[bend left=30] (0,1);

  \node[my_node,label=above:{\small $x'$}] at (0,1) {};
  \node[my_node,color=gray] at (-0.185,0.8) {};
  \node[my_node,color=gray] at (-0.184,0.2) {};
  \node[my_node,label=above:{\small $y'$}] at (1,1) {};
  \node[my_node,label=below:{\small $x$}] at (0,0) {};
  \node[my_node,label={[label distance=3pt]270:{\small $y\!=\!y_{m\!-\!1}$}}] at (1,0) {};

  \node[my_dot,color=gray] at (-0.38,0.3) {};
  \node[my_dot,color=gray] at (-0.415,0.45) {};
  \node[my_dot,color=gray] at (-0.41,0.6) {};
  \node[my_dot,color=gray] at (-0.36,0.75) {};

  \node[my_dot] at (2.4,0.8) {};
  \node[my_dot] at (2.526,0.65) {};
  \node[my_dot] at (2.556,0.455) {};
  \node[my_dot] at (2.476,0.25) {};

\end{tikzpicture}

\caption{The fundamental cycle $\mathcal{C}$ of $T\cup\{e'\}$, shaded in black.}
\label{fig:notation}
\end{figure}

By Remark~\ref{rem: root dependence}, the calculation of the angle
$\angle(T,T')\in\Jac(G)$ is independent of the choice of root vertex.  Choose $x'$ as the root and orient $T$ and $T'$ towards $x'$. We wish to study the sum of the angles at each vertex $v\ne x'$ of~$G$ between the edges of $T$ and $T'$ that are outgoing from $v$.

Having rooted the trees at $x'$, we start by observing that the path between~$y$ and $y'$ in $T$ is directed from $y'$ to $y$, whereas in $T'$ it has the opposite orientation.  This is illustrated in Figure~\ref{fig:rooted trees}.  Furthermore, all other edges shared by $T$ and $T'$ have the same orientation.  Indeed, consider a vertex $v$ not on $\mathcal{C}$ and say its unique path in $T$ to $x'$ first meets $\mathcal{C}$ at $v'$; then the same path $v$--$v'$ in $T'$ must be an initial subpath of the unique path in~$T'$ from $v$ to $x'$, so in particular the edge leaving $v$ is unchanged.

\vspace{.5cm}

\begin{figure}[!htbp]
  \begin{center}
\begin{tikzpicture}[scale=2]
  \node at (-1.5,0.5) {$T$};
  \node[above] at (0,1) {$x'$};
  \node[above] at (1,1) {$y'$};
  \node[below] at (0,0) {$x$};
  \node[below] at (1,0) {$y$};
  \draw[ultra thick,myarrow] (1,0) to node[midway,below] {$e$} (0,0);
  \draw[ultra thick,gray!60] (0,1) to node[black,midway,above] {$e'$} (1,1);
  \draw[ultra thick,gray!60] (0,1) to[bend right=60] (0,0);
  \draw[ultra thick,myarrow] (1,1) to[out=-30,in=100] (1.25,0.6);
  \draw[ultra thick,gray!60] (1.25,0.6) to[bend left=30] (1,0);
  \node at (0.5,0.5) {$f$};
  \draw[ultra thick,myarrow] (1.8,-0.11) to[bend left=10] (1,0);
  \draw[ultra thick,myarrow] (2.5,-0.1) to[bend left=10] (1.8,-0.11) ;
  \draw[ultra thick,myarrow] (2.8,0.6) to[bend left=20] (2.5,-0.1);
  \draw[ultra thick,myarrow] (2.0,1.2) to[out=-10,in=110] (2.8,0.6);
  \draw[ultra thick,myarrow] (1.65,0.6) to[out=45,in=210] (2.0,1.2);
  \draw[ultra thick,myarrow] (1.25,0.6) to (1.65,0.6);
  \draw[ultra thick,myarrow] (0,0) to[bend left=10] (-0.8,-0.1);
  \draw[ultra thick,myarrow] (-0.8,-0.1) to[out=160,in=250] (-1.0,0.8);
  \draw[ultra thick,myarrow] (-1.0,0.8) to[bend left=30] (0,1);

  \begin{scope}[shift={(0,-2.0)}]
  \node at (-1.5,0.5) {$T'$};
  \node[above] at (0,1) {$x'$};
  \node[above] at (1,1) {$y'$};
  \node[below] at (0,0) {$x$};
  \node[below] at (1,0) {$y$};
  \draw[ultra thick,gray!60] (1,0) to node[black,midway,below] {$e$} (0,0);
  \draw[ultra thick,myarrow] (1,1) to node[midway,above] {$e'$} (0,1);
  \draw[ultra thick,gray!60] (0,1) to[bend right=60] (0,0);
  \draw[ultra thick,myarrow] (1.25,0.6) to[out=100,in=-30] (1,1);
  \draw[ultra thick,gray!60] (1.25,0.6) to[bend left=30] (1,0);
  \node at (0.5,0.5) {$f$};
  \draw[ultra thick,myarrow] (1,0) to[bend right=10] (1.8,-0.11);
  \draw[ultra thick,myarrow] (1.8,-0.11) to[bend right=10] (2.5,-0.1);
  \draw[ultra thick,myarrow] (2.5,-0.1)to[bend right=20] (2.8,0.6);
  \draw[ultra thick,myarrow] (2.8,0.6) to[out=110,in=-10] (2.0,1.2);
  \draw[ultra thick,myarrow] (2.0,1.2) to[out=210,in=45] (1.65,0.6);
  \draw[ultra thick,myarrow] (1.65,0.6) to (1.25,0.6);
  \draw[ultra thick,myarrow] (0,0) to[bend left=10] (-0.8,-0.1);
  \draw[ultra thick,myarrow] (-0.8,-0.1) to[out=160,in=250] (-1.0,0.8);
  \draw[ultra thick,myarrow] (-1.0,0.8) to[bend left=30] (0,1);
\end{scope}
\end{tikzpicture}
  \end{center}
\caption{Parts of the trees $T$ and $T'$, rooted at the vertex $x'$.}\label{fig:rooted trees}
\end{figure}
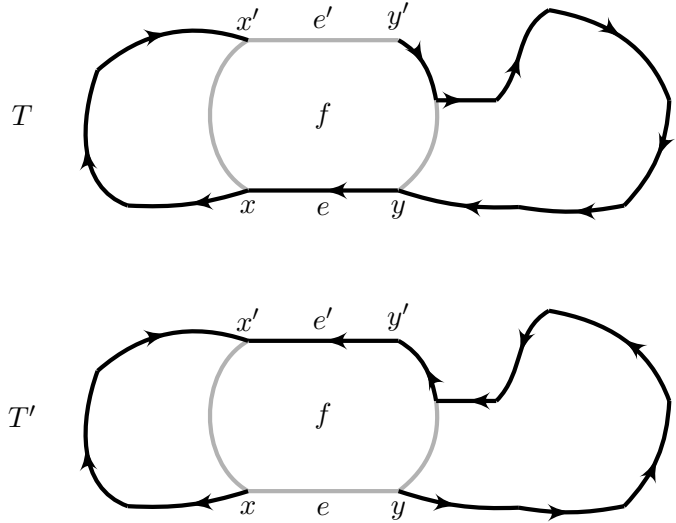

\vspace{.5cm}

Let us fix some notation before going further.  Write \[y' = y_0, e_1, y_1, e_2,\ldots, y_{m-1} = y\] for the sequence of vertices and directed edges in the $y'$--$y$ path in $T$.  For each directed edge $e_i$, we write $f_i$ (respectively $h_i$) for the face of $G$ to the right (respectively left) of $e_i$.

For convenience, we extend the notation above as follows.  We denote by~$h_0$ the face of $G$ to the left of $e'$ when oriented from $x'$ to $y'$, and we denote by $h_m$ the face of $G$ to the left of $e$ when oriented from $y$ to $x$.  Next, consider the path from $x$ to $x'$ that bounds $f$ and such that $f$ lies on its right. Call the faces on the {\em left} side of this $x$--$x'$ path $h_{m+1},\ldots,h_N$. See Figure \ref{fig:notation}.

Letting $e_0=e'$ and $e_m=e$, the angle between $T$ and $T'$ then is given by
\[\angle(T,T') = \sum_{i=0}^{m-1} \angle^{y_i} (e_{i+1}, e_i) \in \Jac(G),\]
where in each expression in the sum, we regard each edge as being oriented away from $y_i$ in turn.  Then by Lemma~\ref{lem:dual angle}, we have
\[\phi(\angle(T,T')) = (f_1 - h_0) + (f_2 - h_1) + \cdots+(f_{m-1}-h_{m-2}) +(f-h_{m-1}) \in\Jac(G^*).\]
The angle between $T$ and $T'$ is shown in~Figure~\ref{fig:angle}.  The signs indicate
$\phi(\angle(T,T'))\in\Jac(G^*)$.

\vspace{.5cm}

\begin{figure}[!h]
  \begin{center}
\begin{tikzpicture}[scale=2.5]
  \node[above] at (0,1) {$x'$};
  \node[above] at (1,1) {$y'$};
  \node[below] at (0,0) {$x$};
  \node[below] at (1,0) {$y$};
  \draw[ultra thick] (0,0) to node[midway,below] {$e$} (1,0);
  \draw[ultra thick] (0,1) to node[midway,above] {$e'$} (1,1);
  \draw[ultra thick,gray!60] (0,1) to[bend right=60] (0,0);
  \draw[ultra thick] (1,1) to[out=-30,in=100] (1.25,0.6);
  \draw[ultra thick,gray!60] (1.25,0.6) to[bend left=30] (1,0);
  \node at (0.5,0.5) {$f$};
  \draw[ultra thick] (1,0) to[bend right=10] (1.8,-0.11);
  \draw[ultra thick] (1.8,-0.11) to[bend right=10] (2.5,-0.1);
  \draw[ultra thick] (2.5,-0.1) to[bend right=20] (2.8,0.6);
  \draw[ultra thick] (2.8,0.6) to[out=110,in=-10] (2.0,1.2);
  \draw[ultra thick] (2.0,1.2) to[out=210,in=45] (1.65,0.6);
  \draw[ultra thick] (1.65,0.6) to (1.25,0.6);
  \draw[ultra thick] (0,0) to[bend left=10] (-0.8,-0.1);
  \draw[ultra thick] (-0.8,-0.1) to[out=160,in=250] (-1.0,0.8);
  \draw[ultra thick] (-1.0,0.8) to[bend left=30] (0,1);
  \draw[ultra thick,-latex'] (1,0) to[bend right=8] (1.163,0.17);
  \draw[ultra thick,-latex'] (1,0) to (1.37,0.07);
  \draw[ultra thick,myarrowshort] (1,0) to[bend right=10] (1.8,-0.11);
  \draw[ultra thick,-latex'] (1.8,-0.11) to (1.7,0.07);
  \draw[ultra thick,-latex'] (1.8,-0.11) to (1.9,0.07);
  \draw[ultra thick,myarrowshort] (1.8,-0.11) to[bend right=10] (2.5,-0.1);
  \draw[ultra thick,-latex'] (2.5,-0.1) to (2.35,0.08);
  \draw[ultra thick,myarrowshort] (2.5,-0.1)to[bend right=20] (2.8,0.6);
  \draw[ultra thick,-latex'] (2.8,0.6) to (2.6,0.5);
  \draw[ultra thick,-latex'] (2.8,0.6) to (2.6,0.66);
  \draw[ultra thick,myarrowshorter] (2.8,0.6) to[out=110,in=-10] (2.0,1.2);
  \draw[ultra thick,-latex'] (2.0,1.2) to (2.05,1.0);
  \draw[ultra thick,myarrowshort] (2.0,1.2) to[out=210,in=45] (1.65,0.6);
  \draw[ultra thick,-latex'] (1.65,0.6) to (1.9,0.55);
  \draw[ultra thick,-latex'] (1.65,0.6) to (1.66,0.4);
  \draw[ultra thick,myarrow] (1.65,0.6) to (1.25,0.6);
  \draw[ultra thick,-latex'] (1.25,0.6) to (1.40,0.4);
  \draw[ultra thick,-latex'] (1.25,0.6) to[bend left=6] (1.249,0.4);
  \draw[ultra thick,myarrowshort] (1.25,0.6) to[out=100,in=-30] (1,1);
  \draw[ultra thick,myarrowshort] (1,1) to (0,1);
  \node at (1.00,0.14) {\tiny $\bm{+}$};
  \node at (1.20,-0.13) {\tiny $\bm{-}$};
  \node at (1.62,0.01) {\tiny $\bm{+}$};
  \node at (1.9,-0.21) {\tiny $\bm{-}$};
  \node at (2.27,0.025) {\tiny $\bm{+}$};
  \node at (2.69,-0.04) {\tiny $\bm{-}$};
  \node at (2.70,0.43) {\tiny $\bm{+}$};
  \node at (2.85,0.72) {\tiny $\bm{-}$};
  \node at (2.14,1.07) {\tiny $\bm{+}$};
  \node at (1.86,1.21) {\tiny $\bm{-}$};
  \node at (1.85,0.66) {\tiny $\bm{+}$};
  \node at (1.55,0.68) {\tiny $\bm{-}$};
  \node at (1.46,0.48) {\tiny $\bm{+}$};
  \node at (1.32,0.75) {\tiny $\bm{-}$};
  \node[above] at (0.8,1) {\tiny $\bm{-}$};
  \node[below] at (0.8,1) {\tiny $\bm{+}$};
\end{tikzpicture}
  \end{center}
  \caption{$\angle(T,T')\in\Jac(G)$ and $\phi(\angle(T,T'))\in\Jac(G^*)$, the former drawn with arrows and the latter drawn with plus and minus signs.}\label{fig:angle}
\end{figure}
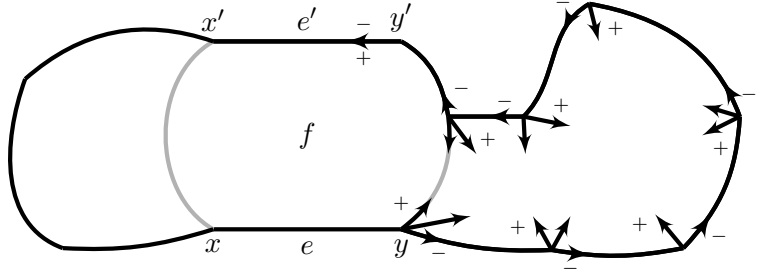

\vspace{.5cm}

Next,
consider the oriented cycle $C$ running from $x'$ to
$y'$, then along edges of $T$ from $y'$ to $x$, then along edges of $f$ back to
$x'$, as shown in~Figure~\ref{fig:cut of G*}.  The dual $C^*$ of $C$ is a cut of
$G^*$, so $\del_{G^*}(C^*) = 0 \in \Jac(G^*)$.  On the other hand,
\[\del_{G^*}(C^*) = (h_0-f) + (h_1-f_1) + \cdots + (h_{m-1}-f_{m-1}) + \sum_{i=m}^N (h_i-f).\]
The signs in Figure~\ref{fig:cut of G*} indicate $\del_{G^*}(C^*)\in\Jac(G^*)$.
\begin{figure}[!h]
  \begin{center}
\begin{tikzpicture}[scale=2.5]
  \node[above] at (0,1) {$x'$};
  \node[above] at (1,1) {$y'$};
  \node[below] at (0,0) {$x$};
  \node[below] at (1,0) {$y$};
  \draw[ultra thick] (0,0) to node[midway,below] {$e$} (1,0);
  \draw[ultra thick] (0,1) to node[midway,above] {$e'$} (1,1);
  \draw[ultra thick] (0,1) to[bend right=60] (0,0);
  \draw[ultra thick] (1,1) to[out=-30,in=100] (1.25,0.6);
  \draw[ultra thick,gray!60] (1.25,0.6) to[bend left=30] (1,0);
  \filldraw (-0.232,0.67) circle (0.2mm);
  \filldraw (-0.232,0.33) circle (0.2mm);
  \node at (0.5,0.5) {$f$};
  \draw[ultra thick] (1,0) to[bend right=10] (1.8,-0.11);
  \draw[ultra thick] (1.8,-0.11) to[bend right=10] (2.5,-0.1);
  \draw[ultra thick] (2.5,-0.1) to[bend right=20] (2.8,0.6);
  \draw[ultra thick] (2.8,0.6) to[out=110,in=-10] (2.0,1.2);
  \draw[ultra thick] (2.0,1.2) to[out=210,in=45] (1.65,0.6);
  \draw[ultra thick] (1.65,0.6) to (1.25,0.6);
  \draw[ultra thick,gray!60] (0,0) to[bend left=10] (-0.8,-0.1);
  \draw[ultra thick,gray!60] (-0.8,-0.1) to[out=160,in=250] (-1.0,0.8);
  \draw[ultra thick,gray!60] (-1.0,0.8) to[bend left=30] (0,1);
  \node at (0.5,-0.23) {\tiny $\bm{+}$};
  \node at (0.5,0.08) {\tiny $\bm{-}$};
  \node at (1.4,-0.2) {\tiny $\bm{+}$};
  \node at (1.4,-0.01) {\tiny $\bm{-}$};
  \node at (2.13,-0.23) {\tiny $\bm{+}$};
  \node at (2.13,-0.06) {\tiny $\bm{-}$};
  \node at (2.8,0.2) {\tiny $\bm{+}$};
  \node at (2.6,0.2) {\tiny $\bm{-}$};
  \node at (2.6,1.05) {\tiny $\bm{+}$};
  \node at (2.45,0.92) {\tiny $\bm{-}$};
  \node at (1.72,0.95) {\tiny $\bm{+}$};
  \node at (1.9,0.9) {\tiny $\bm{-}$};
  \node at (1.45,0.7) {\tiny $\bm{+}$};
  \node at (1.45,0.51) {\tiny $\bm{-}$};
  \node at (1.24,0.9) {\tiny $\bm{+}$};
  \node at (1.05,0.8) {\tiny $\bm{-}$};
  \node at (0.5,1.27) {\tiny $\bm{+}$};
  \node at (0.5,0.92) {\tiny $\bm{-}$};
  \node at (-0.26,0.87) {\tiny $\bm{+}$};
  \node at (-0.04,0.85) {\tiny $\bm{-}$};
  \node at (-0.35,0.5) {\tiny $\bm{+}$};
  \node at (-0.15,0.5) {\tiny $\bm{-}$};
  \node at (-0.26,0.15) {\tiny $\bm{+}$};
  \node at (-0.08,0.18) {\tiny $\bm{-}$};
\end{tikzpicture}
  \end{center}
  \caption{The cycle $C$ in black and $\del_{G^*}(C^*)$.}\label{fig:cut of G*}
\end{figure}

\vspace{.5cm}

Summing, we have
\[\phi(\angle(T,T')))+\del_{G^*}(C^*) =\sum_{i=m}^N (h_i-f). \]
This sum is shown in Figure~\ref{fig:T+cut}.

\vspace{.5cm}

\begin{figure}[!h]
  \begin{center}
\begin{tikzpicture}[scale=2.5]
  \node[above] at (0,1) {$x'$};
  \node[above] at (1,1) {$y'$};
  \node[below] at (0,0) {$x$};
  \node[below] at (1,0) {$y$};
  \draw[ultra thick] (0,0) to node[midway,below] {$e$} (1,0);
  \draw[ultra thick] (0,1) to node[midway,above] {$e'$} (1,1);
  \draw[ultra thick] (0,1) to[bend right=60] (0,0);
  \draw[ultra thick] (1,1) to[out=-30,in=100] (1.25,0.6);
  \draw[ultra thick,gray!60] (1.25,0.6) to[bend left=30] (1,0);
  \filldraw (-0.232,0.67) circle (0.2mm);
  \filldraw (-0.232,0.33) circle (0.2mm);
  \node at (0.5,0.5) {$f$};
  \draw[ultra thick] (1,0) to[bend right=10] (1.8,-0.11);
  \draw[ultra thick] (1.8,-0.11) to[bend right=10] (2.5,-0.1);
  \draw[ultra thick] (2.5,-0.1) to[bend right=20] (2.8,0.6);
  \draw[ultra thick] (2.8,0.6) to[out=110,in=-10] (2.0,1.2);
  \draw[ultra thick] (2.0,1.2) to[out=210,in=45] (1.65,0.6);
  \draw[ultra thick] (1.65,0.6) to (1.25,0.6);
  \draw[ultra thick,gray!60] (0,0) to[bend left=10] (-0.8,-0.1);
  \draw[ultra thick,gray!60] (-0.8,-0.1) to[out=160,in=250] (-1.0,0.8);
  \draw[ultra thick,gray!60] (-1.0,0.8) to[bend left=30] (0,1);
  \node at (0.5,-0.2) {\tiny $\bm{+}$};
  \node at (0.5,0.08) {\tiny $\bm{-}$};
  \node at (-0.26,0.87) {\tiny $\bm{+}$};
  \node at (-0.04,0.85) {\tiny $\bm{-}$};
  \node at (-0.35,0.5) {\tiny $\bm{+}$};
  \node at (-0.15,0.5) {\tiny $\bm{-}$};
  \node at (-0.26,0.15) {\tiny $\bm{+}$};
  \node at (-0.08,0.18) {\tiny $\bm{-}$};
\end{tikzpicture}
  \end{center}
  \caption{$\phi(\angle(T,T'))+\del_{G^*}(C^*)\in\Jac(G^*)$.}\label{fig:T+cut}
\end{figure}

\vspace{.5cm}

But this sum is exactly $\angle(T^*, T'^*)$.  To see this, root the trees~$T^*$ and~$T'^*$ at a vertex $u$ of
$G^*$ on the cycle in $T^*\cup \{e^*\}$ but different from $f$, as
illustrated in Figure~\ref{fig:dual trees}.
Then the only nonzero vertex angle contributing to $\angle(T^*, T'^*)$ is the angle at the vertex $f$, and by definition, this angle is
$\sum_{i=m}^N (h_i - f)$, as shown in Figure~\ref{fig:angle between duals}.
So we are done.

\end{proof}

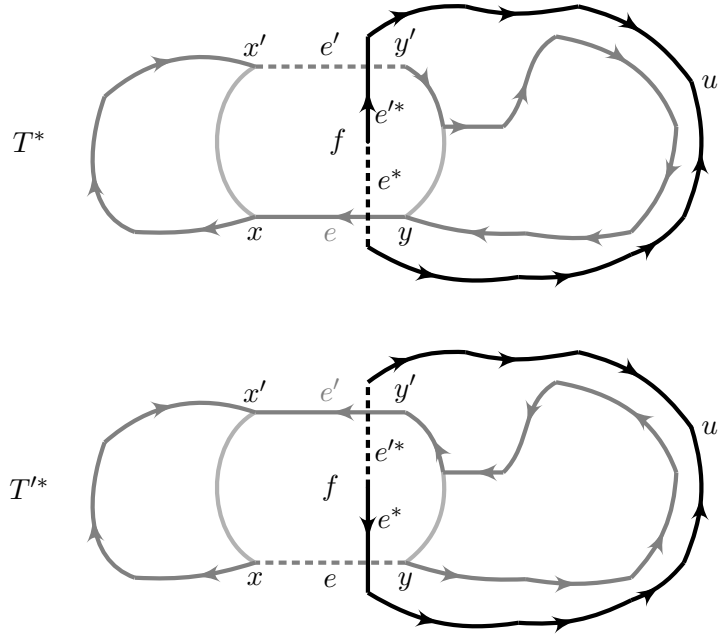
\begin{figure}[!h]
  \begin{center}
\begin{tikzpicture}[scale=2]
  \node at (-1.5,0.5) {$T^*$};
  \node[above] at (0,1) {$x'$};
  \node[above] at (1,1) {$y'$};
  \node[below] at (0,0) {$x$};
  \node[below] at (1,0) {$y$};
  \draw[ultra thick,gray,myarrow] (1,0) to node[midway,below] {$e$} (0,0);
  \draw[ultra thick,gray,densely dashed] (0,1) to node[black,midway,above] {$e'$} (1,1);
  \draw[ultra thick,gray!60] (0,1) to[bend right=60] (0,0);
  \draw[ultra thick,gray,myarrow] (1,1) to[out=-30,in=100] (1.25,0.6);
  \draw[ultra thick,gray!60] (1.25,0.6) to[bend left=30] (1,0);
  \node at (0.55,0.5) {$f$};
  \draw[ultra thick,gray,myarrow] (1.8,-0.11) to[bend left=10] (1,0);
  \draw[ultra thick,gray,myarrow] (2.5,-0.1) to[bend left=10] (1.8,-0.11) ;
  \draw[ultra thick,gray,myarrow] (2.8,0.6) to[bend left=20] (2.5,-0.1);
  \draw[ultra thick,gray,myarrow] (2.0,1.2) to[out=-10,in=110] (2.8,0.6);
  \draw[ultra thick,gray,myarrow] (1.65,0.6) to[out=45,in=210] (2.0,1.2);
  \draw[ultra thick,gray,myarrow] (1.25,0.6) to (1.65,0.6);
  \draw[ultra thick,gray,myarrow] (0,0) to[bend left=10] (-0.8,-0.1);
  \draw[ultra thick,gray,myarrow] (-0.8,-0.1) to[out=160,in=250] (-1.0,0.8);
  \draw[ultra thick,gray,myarrow] (-1.0,0.8) to[bend left=30] (0,1);
  \filldraw (0.75,0.5) circle(0.1mm);
  \draw[ultra thick,densely dashed] (0.75,-0.2) to (0.75,0.5);
  \node at (0.9,0.25) {$e^*$};
  \draw[ultra thick,myarrow] (0.75,0.5) to (0.75,1.2);
  \node at (0.9,0.7) {$e'^*$};
  \draw[ultra thick,myarrow] (0.75,1.2) to[bend left=20] (1.4,1.4);
  \draw[ultra thick,myarrow] (1.4,1.4) to[out=-15,in=190] (2.15,1.4);
  \draw[ultra thick,myarrow] (2.15,1.4) to[bend left=20] (2.9,0.9);
  \node[right] at (2.9,0.9) {$u$};
  \draw[ultra thick,myarrow] (2.85,0.0) to[bend right=20] (2.9,0.9);
  \draw[ultra thick,myarrow] (2.5,-0.25) to[bend right=15] (2.85,0.0);
  \draw[ultra thick,myarrow] (1.75,-0.4) to[bend right=15] (2.5,-0.25);
  \draw[ultra thick,myarrow] (0.75,-0.2) to[bend right=20] (1.75,-0.4);

  \begin{scope}[shift={(0,-2.3)}]
  \node at (-1.5,0.5) {$T'^*$};
  \node[above] at (0,1) {$x'$};
  \node[above] at (1,1) {$y'$};
  \node[below] at (0,0) {$x$};
  \node[below] at (1,0) {$y$};
  \draw[ultra thick,gray,densely dashed] (1,0) to node[black,midway,below] {$e$} (0,0);
  \draw[ultra thick,gray,myarrow] (1,1) to node[midway,above] {$e'$} (0,1);
  \draw[ultra thick,gray!60] (0,1) to[bend right=60] (0,0);
  \draw[ultra thick,gray,myarrow] (1.25,0.6) to[out=100,in=-30] (1,1);
  \draw[ultra thick,gray!60] (1.25,0.6) to[bend left=30] (1,0);
  \node at (0.5,0.5) {$f$};
  \draw[ultra thick,gray,myarrow] (1,0) to[bend right=10] (1.8,-0.11);
  \draw[ultra thick,gray,myarrow] (1.8,-0.11) to[bend right=10] (2.5,-0.1);
  \draw[ultra thick,gray,myarrow] (2.5,-0.1)to[bend right=20] (2.8,0.6);
  \draw[ultra thick,gray,myarrow] (2.8,0.6) to[out=110,in=-10] (2.0,1.2);
  \draw[ultra thick,gray,myarrow] (2.0,1.2) to[out=210,in=45] (1.65,0.6);
  \draw[ultra thick,gray,myarrow] (1.65,0.6) to (1.25,0.6);
  \draw[ultra thick,gray,myarrow] (0,0) to[bend left=10] (-0.8,-0.1);
  \draw[ultra thick,gray,myarrow] (-0.8,-0.1) to[out=160,in=250] (-1.0,0.8);
  \draw[ultra thick,gray,myarrow] (-1.0,0.8) to[bend left=30] (0,1);
  \filldraw (0.75,0.5) circle(0.1mm);
  \draw[ultra thick,myarrow] (0.75,0.5) to (0.75,-0.2);
  \node at (0.9,0.30) {$e^*$};
  \draw[ultra thick,densely dashed] (0.75,0.5) to (0.75,1.2);
  \node at (0.9,0.75) {$e'^*$};
  \draw[ultra thick,myarrow] (0.75,1.2) to[bend left=20] (1.4,1.4);
  \draw[ultra thick,myarrow] (1.4,1.4) to[out=-15,in=190] (2.15,1.4);
  \draw[ultra thick,myarrow] (2.15,1.4) to[bend left=20] (2.9,0.9);
  \filldraw (2.9,0.9) circle (0.1mm);
  \node[right] at (2.9,0.9) {$u$};
  \draw[ultra thick,myarrow] (2.85,0.0) to[bend right=20] (2.9,0.9);
  \draw[ultra thick,myarrow] (2.5,-0.25) to[bend right=15] (2.85,0.0);
  \draw[ultra thick,myarrow] (1.75,-0.4) to[bend right=15] (2.5,-0.25);
  \draw[ultra thick,myarrow] (0.75,-0.2) to[bend right=20] (1.75,-0.4);
\end{scope}
\end{tikzpicture}
  \end{center}
  \caption{Parts of the trees $T^*$ and $T'^*$, rooted at $u$.}~\label{fig:dual trees}
\end{figure}

\bigskip

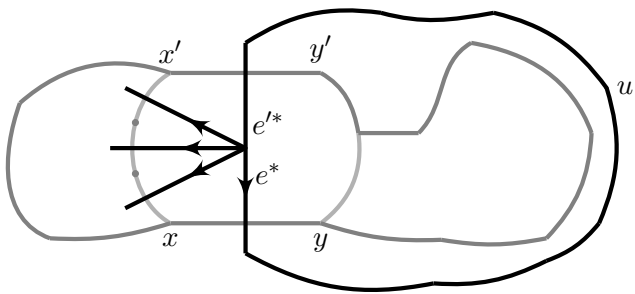
\begin{figure}[!h]
  \begin{center}
\begin{tikzpicture}[scale=2.0]
  \node[above] at (0,1) {$x'$};
  \node[above] at (1,1) {$y'$};
  \node[below] at (0,0) {$x$};
  \node[below] at (1,0) {$y$};
  \draw[ultra thick,gray] (1,0) to (0,0);
  \draw[ultra thick,gray] (1,1) to  (0,1);
  \draw[ultra thick,gray!60] (0,1) to[bend right=60] (0,0);
  \draw[ultra thick,gray] (1.25,0.6) to[out=100,in=-30] (1,1);
  \draw[ultra thick,gray!60] (1.25,0.6) to[bend left=30] (1,0);
  \draw[ultra thick,gray] (1,0) to[bend right=10] (1.8,-0.11);
  \draw[ultra thick,gray] (1.8,-0.11) to[bend right=10] (2.5,-0.1);
  \draw[ultra thick,gray] (2.5,-0.1)to[bend right=20] (2.8,0.6);
  \draw[ultra thick,gray] (2.8,0.6) to[out=110,in=-10] (2.0,1.2);
  \draw[ultra thick,gray] (2.0,1.2) to[out=210,in=45] (1.65,0.6);
  \draw[ultra thick,gray] (1.65,0.6) to (1.25,0.6);
  \draw[ultra thick,gray] (0,0) to[bend left=10] (-0.8,-0.1);
  \draw[ultra thick,gray] (-0.8,-0.1) to[out=160,in=250] (-1.0,0.8);
  \draw[ultra thick,gray] (-1.0,0.8) to[bend left=30] (0,1);
  \filldraw (0.5,0.5) circle(0.1mm);
  \draw[ultra thick] (0.5,0.5) to (0.5,1.2);
  \node at (0.65,0.33) {$e^*$};
  \node at (0.65,0.67) {$e'^*$};
  \draw[ultra thick] (0.5,1.2) to[bend left=20] (1.4,1.4);
  \draw[ultra thick] (1.4,1.4) to[out=-15,in=190] (2.15,1.4);
  \draw[ultra thick] (2.15,1.4) to[bend left=20] (2.9,0.9);
  \filldraw (2.9,0.9) circle (0.1mm);
  \node[right] at (2.9,0.9) {$u$};
  \draw[ultra thick] (2.85,0.0) to[bend right=20] (2.9,0.9);
  \draw[ultra thick] (2.5,-0.25) to[bend right=15] (2.85,0.0);
  \draw[ultra thick] (1.75,-0.4) to[bend right=15] (2.5,-0.25);
  \draw[ultra thick] (0.5,-0.2) to[bend right=20] (1.75,-0.4);
  \filldraw[gray] (-0.232,0.67) circle (0.2mm);
  \filldraw[gray] (-0.232,0.33) circle (0.2mm);
  \draw[ultra thick,myarrow] (0.5,0.5) to (0.5,-0.2);
  \draw[ultra thick,myarrow] (0.5,0.5) to (-0.3,0.10);
  \draw[ultra thick,myarrow] (0.5,0.5) to (-0.4,0.5);
  \draw[ultra thick,myarrow] (0.5,0.5) to (-0.3,0.9);
\end{tikzpicture}
  \end{center}
  \caption{$\angle(T^*,T'^*)$.}\label{fig:angle between duals}
\end{figure}

\newpage We now prove our main result.

\begin{proof}[Proof of Theorem~\ref{thm:main}]
Given $[D]\in \Jac(G)$ and $T\in \T(G)$, let $T' = [D]\cdot T$, and let
$T''=\phi([D])\cdot T^*$.  We would like to show that $T''=T'^*$.  By Lemma~\ref{lem:activated edges},
\[
\phi(\angle(T,T')) = \phi([-D]) = \angle(T^*,T'').
\]
By Lemma~\ref{lem:duality},
\[
\phi(\angle(T,T')) = \angle(T^*,T'^*).
\]
Hence, $\angle(T^*,T'')=\angle(T^*,T'^*)$.  Therefore, $\angle(T'',T'^*)=0$, and
the result then follows from Corollary~\ref{cor:zero angle}.
\end{proof}


\end{document}